\theoremstyle{plain}
\newtheorem{theorem}{Theorem}
\newtheorem{proposition}[theorem]{Proposition}
\newtheorem{remark}[theorem]{Remark}
\begin{document}

\title{\textbf{\Large Parameter estimation for one-sided heavy-tailed distributions}}

\date{ }

\author{Phillip Kerger\thanks{Department of Applied Mathematics and Statistics, Johns Hopkins University. Email:\ pkerger@jhu.edu} \ and Kei Kobayashi\thanks{Corresponding author. Department of Mathematics, Fordham University. Email:\ kkobayashi5@fordham.edu}}

\maketitle

%
%

\begin{abstract}
Stable subordinators, and more general subordinators possessing power law probability tails, have been widely used in the context of subdiffusions, where particles get trapped or immobile in a number of time periods, called constant periods. The lengths of the constant periods follow a one-sided distribution which involves a parameter between 0 and 1 and whose first moment does not exist. 
This paper constructs an estimator for the parameter, applying the method of moments to the number of observed constant periods in a fixed time interval.
The resulting estimator is asymptotically unbiased and consistent, and it is well-suited for situations where multiple observations of the same subdiffusion process are available. 
We present supporting numerical examples and an application to market price data for a low-volume stock. 
 \vspace{3mm}

\noindent\textit{Key words:} method of moments, one-sided stable distribution, heavy tails, subdiffusion, inverse stable subordinator\vspace{3mm}

\noindent\textit{2010 Mathematics Subject Classification:} 62F10, 60G52, 62F12

\end{abstract}

\section{Introduction}\label{section_1}

A wide variety of phenomena whose states change over time at random exhibit a number of time periods in which the states remain unchanged.
These range across stock market data, diffusion of large molecules within cells, movement of bacteria, electricity markets, and more; see e.g.\ Chapter 1 of \cite{HKU-book}. 
For example, in finance, although the celebrated Black--Scholes--Merton model has been widely used to describe fluctuations of the prices of financial products, the model uses
a geometric Brownian motion, which fails to capture the constant-price periods that many low-volume equities experience.
This motivates the construction of time-changed stochastic processes exhibiting such constant periods. 
As a simple example, consider a Brownian motion $(B_t)_{t\ge 0}$ composed with the inverse $(E_t)_{t\ge 0}$ of an independent $\beta$-stable subordinator (or \textit{inverse $\beta$-stable subordinator} for short). The resulting stochastic process $B\circ E=(B_{E_t})_{t\ge 0}$, called a \textit{time-changed Brownian motion}, shows constant periods and has variance growing at the rate of $t^\beta$. Since the stability index $\beta$ takes a value in the interval $(0,1)$ and the variance of the Brownian motion $(B_t)$ grows at the rate of $t$, particles represented by the time-changed Brownian motion spread at a slower rate than the regular Brownian particles for $t>1$. Because of this, the particular time-changed Brownian motion $(B_{E_t})$ and its variants are often referred to as \textit{subdiffusion processes}.  
Figure \ref{fig:three_figures} gives a graphical comparison of sample paths of the inverse $\beta$-stable subordinator $(E_t)$ and the time-changed Brownian motion $(B_{E_t})$ with different values of $\beta$. 
As the simulations show, 
we expect to observe longer constant periods for a smaller value of $\beta$.
 The time-changed Brownian motion $(B_{E_t})$ is not a Markov process and  
 its densities satisfy the time-fractional heat equation 
$
    \partial_t^\beta p(t,x)=(1/2) \partial_x^2 p(t,x), 
$
where $\partial_t^\beta$ denotes the Caputo fractional derivative of order $\beta$ (see \cite{MS_1,MS_2}). 
For more details about properties of $(B_{E_t})$ and its various extensions (including stochastic differential equations) as well as their continuous-time random walk counterparts, see e.g.\ \cite{MeerschaertSikorskii,HKU-book} and references therein.

\begin{figure}\centering
\begin{minipage}{0.62\hsize}
  \includegraphics[width=1.8in,height=1.8in]{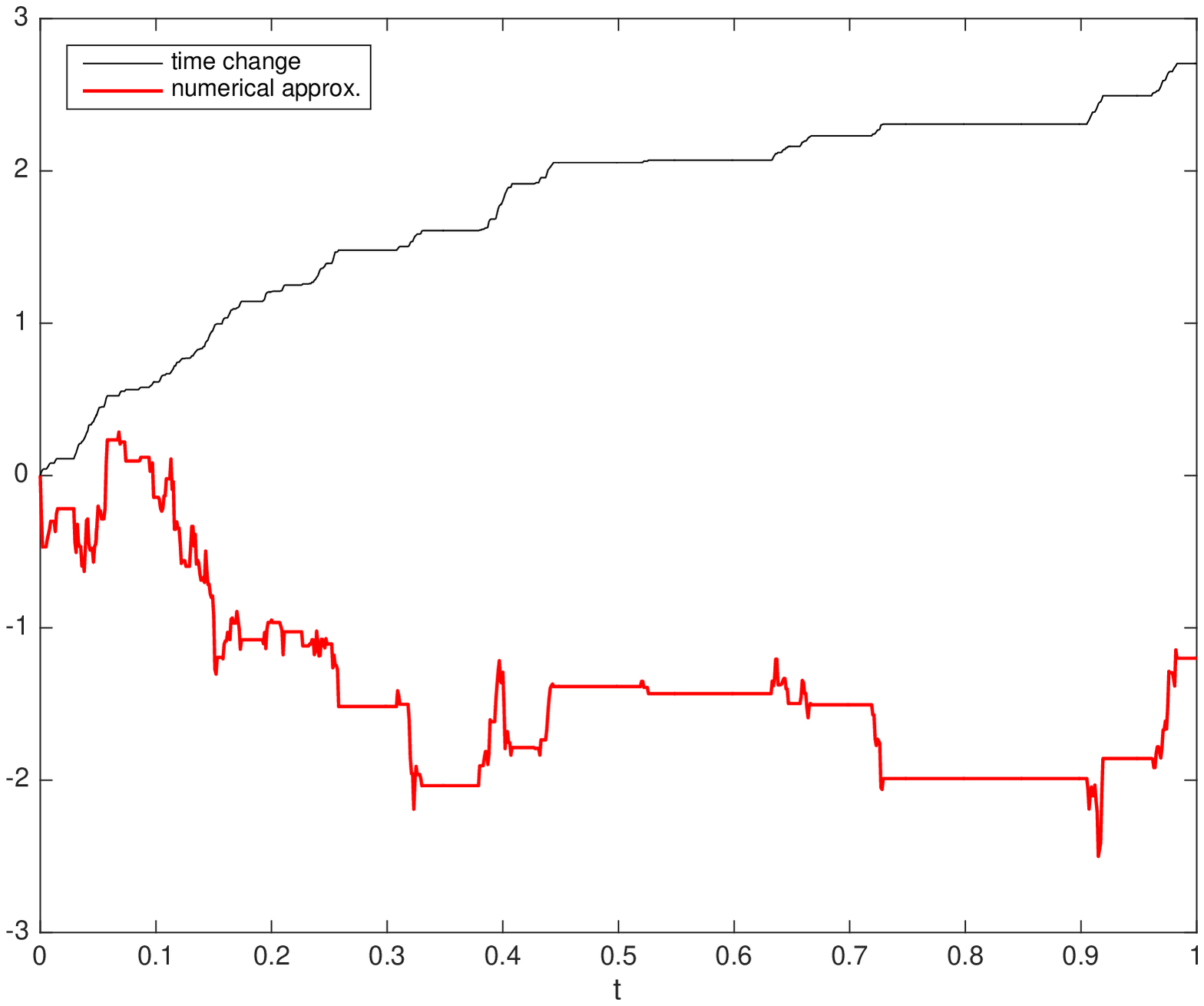}
  \includegraphics[width=1.8in,height=1.8in]{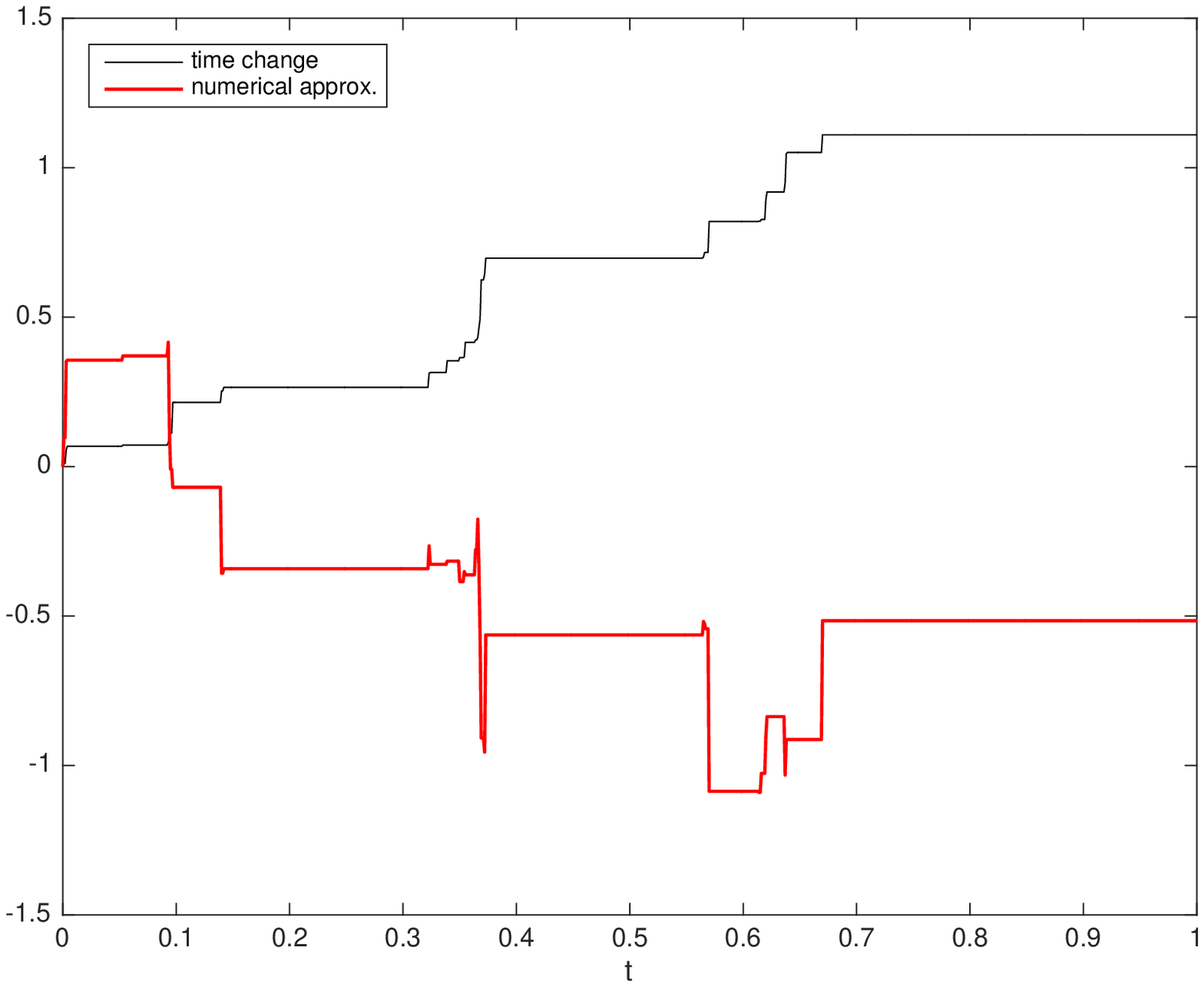}
  \caption{Sample paths of the inverse $\beta$-stable subordinator $(E_t)$ (black) and the time-changed Brownian motion $(B_{E_t})$ (red) with  
  $\beta=0.7$ (left) and $\beta=0.5$ (right)}
   \label{fig:three_figures}
\end{minipage}
\hspace{5mm}
\begin{minipage}{0.33\hsize}
  \includegraphics[height=4.5cm, width = 5cm]{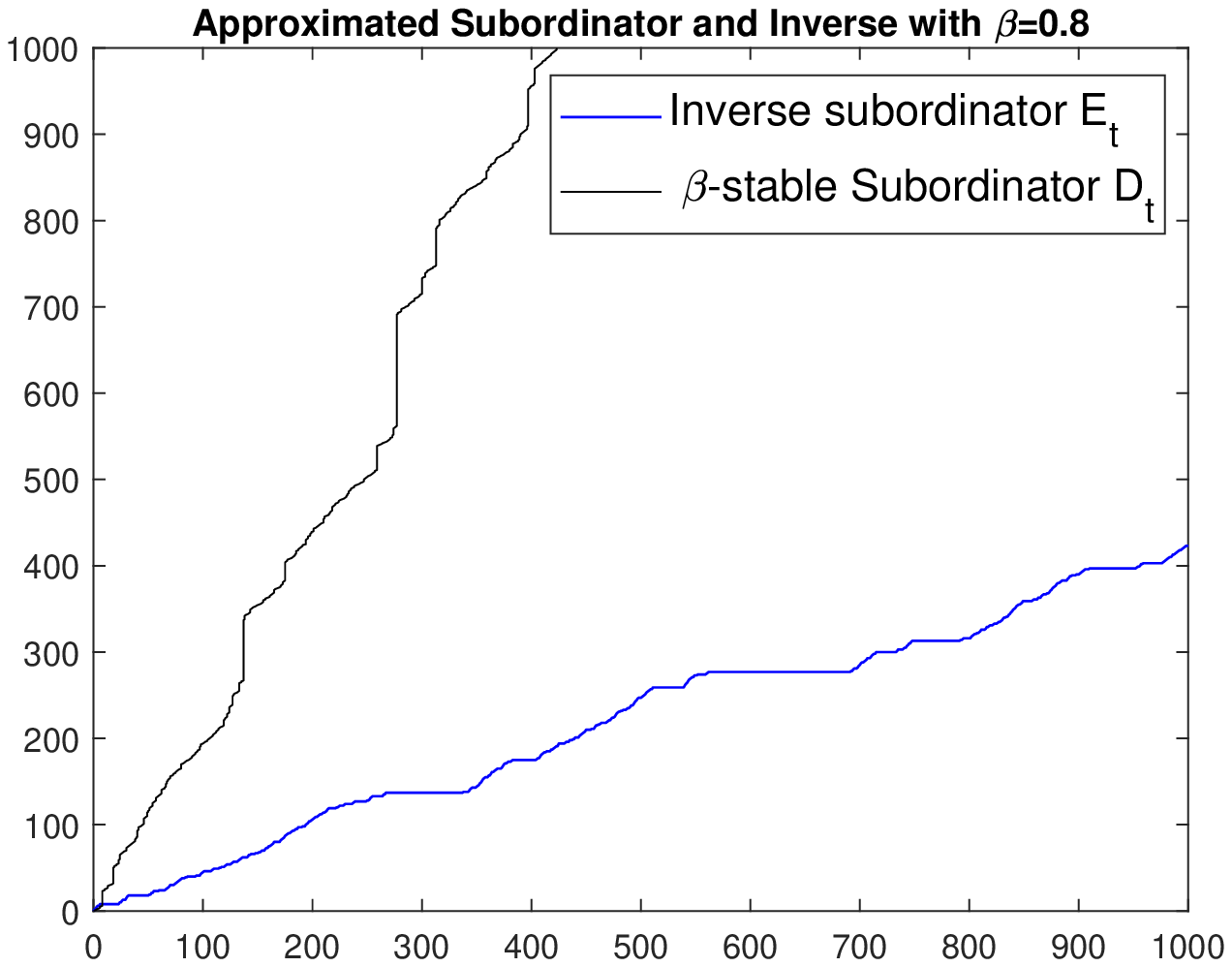}
  \caption{Sample paths of a $0.8$-stable subordinator $(D_t)$ (black) and its inverse $(E_t)$ (blue).} 
  \label{fig:inverse}
\end{minipage}

\end{figure}

Various methods for estimating the stability index of a stable distribution 
have been presented in the literature; see e.g.\ \cite{AbanMeerschaert01,AbanMeerschaert04,Beirlant99,Cahoy2012,Feuerverger99,Hill75,HoskingWallis87,McCulloch97,MS98,Nolan98}.
For example, Hill's estimator in \cite{Hill75} is popular and well-known for its robustness; it only assumes the underlying distribution has power law tails. 
In \cite{AbanMeerschaert01}, the authors proposed a shift-invariant version of Hill's estimator to increase the robustness. On the other hand, in  \cite{AbanMeerschaert04}, they revealed a connection between Hill's estimator and a least-squares estimator obtained from a $\log$-$\log$ transformation of the power law tails. 
The maximum likelihood estimator (MLE)  
was established in \cite{Nolan98}, overcoming the difficulty that the stable density does not have a closed form. 
The MLE in general requires a particular distributional form, so it is not as robust as Hill's estimator; however, it uses all of the data, rather than only the largest order statistics based on which Hill's estimator and its variants are constructed. 
In \cite{Cahoy2012}, another estimator using all of the data was constructed via the analysis of the $M$-wright function together with the method of moments.

In \cite{Janczura}, the authors compared six known parameter estimation methods in the context of subdiffusions, where the lengths of the constant periods follow a one-sided (totally right-skewed) $\beta$-stable distribution. 
This is the situation we also consider in this paper. However, we propose a new estimator for the stability index $\beta$ based on the recent development of numerical approximations of subdiffusion processes in \cite{JinKobayashi,JumKobayashi,Magdziarz_simulation,Magdziarz_spa}. Our idea is to regard real data exhibiting constant periods as a numerically approximated path of some time-changed process 
and to use the method of moments to estimate $\beta$. Note, however, that the method of moments does not apply to the stable distribution itself in the usual sense since it does not have the first moment. We instead apply the method to the ``number'' (rather than the ``lengths'') of observed constant periods. 
Cahoy's estimator in \cite{Cahoy2012} is based on the logarithmic moments of the stable distribution and has a connection to the estimator to be proposed in this paper; see Remark \ref{remark_Tlarge}.

Our estimator is especially suited for use on data sets with many observations over a long period of time (i.e.\ when both the number of observed paths and the time horizon are large) since it is asymptotically unbiased and consistent; see Remark \ref{remark_unbiasedness}(a). 
Moreover, it is robust in the sense that it only requires the underlying subordinator to have power law probability tails; see Remark \ref{remark_mixture}.
One major advantage of using our estimator as opposed to some other well-known estimators is that it allows one to estimate the entire range of $\beta$ values with reasonable accuracy and precision; see Remark \ref{remark_advantage}(a).

\section{Stable subordinators and their inverses}\label{section_2}

Throughout the paper, all stochastic processes are assumed to be defined on a probability space $(\Omega, \mathcal{F}, \mathbb{P})$ and $\mathbb{E}$ denotes the expectation under $\mathbb{P}$. 
A stochastic process $(D_t)_{t\ge 0}$ having independent and stationary increments is called a \textit{L\'evy process}. For the purpose of this paper, let us consider a one-dimensional L\'evy process with increasing, c\`adl\`ag paths (right continuous paths with left limits) starting at $0$, which is usually called a \textit{subordinator}. The distribution of a subordinator is characterized by its Laplace transform 
\begin{align}\label{Laplace_stable}
    \mathbb{E}[e^{-sD_t}]=e^{-t\psi(s)}, \ \ s>0, 
\end{align}
where $\psi(s)$, called the \textit{Laplace exponent} of $(D_t)$, is a Bernstein function on $(0,\infty)$ with $\psi(0^+)=0$.  
A \textit{$\beta$-stable subordinator} is a subordinator with $\psi(s)=s^\beta$ for $s>0$, 
where $\beta\in(0,1)$ is a parameter called the \textit{(stability) index}.

The probability tails of the $\beta$-stable subordinator $(D_t)$ are very different from those of a Brownian motion $(B_t)$. 
Namely, for $t>0$, while $B_t$ shows the exponential decay 
\[
	\mathbb{P}(B_t \ge x)\sim \left(\frac{t}{2\pi}\right)^{1/2} x^{-1}e^{-x^2/(2t)} \ \ \textrm{as} \ \  x\to\infty, 
\]
$D_t$ exhibits the slower, polynomial decay 
\[
	\mathbb{P}(D_t \ge x)\sim \frac{t}{\Gamma(1-\beta)}x^{-\beta}  \ \ \textrm{as} \ \  x\to\infty,
\] 
where $\Gamma(\cdot)$ is the Gamma function (see p.37 and p.76 of \cite{Applebaum}). 
Due to the presence of the heavy tail, $D_t$ does not have the $p$th moment for $p\ge \beta$, so the method of moments is not applicable in the usual sense. 
The stable subordinator $(D_t)$ has strictly increasing paths that go to $\infty$ as $t\to\infty$ with infinitely many jumps, where the jump times are dense in $(0,\infty)$ (see Theorem 21.3 of \cite{Sato}).

Define the \textit{inverse} or \textit{first hitting time process} of a $\beta$-stable subordinator $(D_t)$ by 
\[
	E_t=\inf\{u>0: D_u>t\}
\]
 for $t\ge 0$. 
The process $(E_t)_{t\ge 0}$ is called an \textit{inverse $\beta$-stable subordinator}. 
Since $(D_t)$ has strictly increasing paths starting at 0, $(E_t)$ has continuous, nondecreasing paths starting at 0. 
Figure \ref{fig:inverse} illustrates the inverse relationship between the sample paths of $(D_t)$ and $(E_t)$. 
The jumps of $(D_t)$ correspond to the constant periods of $(E_t)$, and the inverse relation implies 
\[
    \{E_t\le x\} = \{D_x\ge t\}  \ \ \textrm{for all} \ \ t,x\ge 0. 
\]
Moreover, unlike $D_t$, the random variable $E_t$ possesses finite exponential moments, i.e.\ $\mathbb{E}[e^{\lambda E_t}]<\infty$ for any $\lambda>0$ (see e.g.\  \cite{JumKobayashi,MagdziarzOrzelWeron}). In particular, the following formula holds for the $p$th moment of $E_t$ for any $p>0$ (see e.g.\ Proposition 5.6 of \cite{HKU-book} or Example 3.1 of \cite{JumKobayashi}) and plays a key role in deriving an estimator for the index $\beta$:
\begin{align}\label{first_moment}
    \mathbb{E}[(E_t)^p]=\frac{\Gamma(p+1)}{\Gamma(p\beta+1)}t^{p\beta}. 
\end{align}

\section{Derivation of an Estimator and its Properties}\label{section_3}

To derive an estimator for the index $\beta$ of an inverse stable subordinator $(E_t)$,  
we briefly discuss below the approximation scheme of $(E_t)$ first presented in \cite{Magdziarz_simulation,Magdziarz_spa} and subsequently used in \cite{JinKobayashi,JumKobayashi}.

Fix an equidistant step size $\delta>0$ and a time horizon $T>0$.
Simulate a sample path of the stable subordinator $(D_t)$ of index $\beta\in(0,1)$,
which has independent and stationary increments, 
by setting $D_0=0$ and then following the rule 
$
    D_{i\delta}:=D_{(i-1)\delta}+Z_i,
$
for $i=1,2,3,\ldots,$
where $Z_i$'s are i.i.d.\ random variables having the same distribution as $D_{\delta}$ (which can be generated via an algorithm presented in \cite{WeronR}). Stop this procedure upon finding the integer $K$ satisfying 
	$T\in[D_{K\delta}, D_{(K+1)\delta}).$
For each path of $(D_t)$, the number $K$ exists as a finite number since $D_t\to\infty$ as $t\to\infty$.   
Note that $K$ is an $\mathbb{N}\cup \{0\}$-valued random variable depending on the initially chosen constants $\delta$ and $T$. 

Next, let
\[
	E^\delta_t
	:=(\min\{n\in \mathbb{N}; D_{n\delta}>t\}-1)\delta
\]
for $t\in[0,T]$.
The sample paths of $(E^\delta_t)_{t\ge 0}$ are nondecreasing step functions with constant jump size $\delta$ and the length of the $i$th constant time interval given by $Z_i$. 
Indeed, 
$
	E^\delta_t=n\delta
$
whenever $t\in[D_{n\delta},D_{(n+1)\delta})$. 
In particular, each path of $(E^\delta_t)$ has a total of $(K+1)$ constant periods, and a.s.,
\begin{align}\label{Edelta_N}
    E^\delta_T=K\delta.
\end{align}
Moreover, it has been derived in \cite{JumKobayashi,Magdziarz_spa} that a.s.,
\begin{align}\label{ineq_Spsi}
	E_t-\delta\le E^\delta_t\le E_t \ \ \textrm{for all} \ \ t\in[0,T].
\end{align}
 Therefore, a.s., $(E^\delta_t)$ converges to  $(E_t)$ uniformly on the time interval $[0,T]$ as $\delta\downarrow 0$. Moreover, 
$
    \mathbb{E}[E_T]-\delta\le \mathbb{E}[E^\delta_T]\le \mathbb{E}[E_T].
$
This together with \eqref{Edelta_N} and \eqref{first_moment} with $p=1$ gives 
\begin{align}\label{key_estimate}
    \frac{T^\beta}{\Gamma(\beta+1)}-\delta
    \le \delta\mathbb{E}[K]\le \frac{T^\beta}{\Gamma(\beta+1)}, 
    \ \ \textrm{or equivalently,} \ \ 
    \eta(\beta)-1
    \le \mu_K\le \eta(\beta),
\end{align}
where $\mu_K=\mathbb{E}[K]$ and 
\begin{align}\label{k_function}
    \eta(\beta)
    =\dfrac{T^{\beta}}{\delta\Gamma(\beta+1)}.
\end{align}
Note that even though $\mu_K$ and $\eta(\beta)$ do not coincide, they are asymptotically equivalent as $T\to\infty$; i.e.\ for fixed $\beta\in(0,1)$ and $\delta>0$, 
\begin{align}\label{revision_1}
   \mu_K\sim \eta(\beta) \ \  \textrm{as} \ \  T\to\infty.
\end{align}
Based on this observation, we define a \textit{method-of-moments-like estimator} $\hat{\beta}$ for the parameter $\beta$ implicitly as a solution to the equation
\begin{align}\label{estimator_def}
  \bar{K} = \eta(\hat{\beta}), 
\end{align}
where $\bar{K}:=\sum_{i=1}^n K_i/n$ is the sample mean of a random sample $K_1,K_2,\ldots,K_n$ of size $n \in \mathbb{N}$ from the distribution of the random variable $K$. 
This is not the method of moments in the usual sense as we do not have the equality $\mu_K=\eta(\beta)$ for a fixed time horizon $T$; however, the relation \eqref{revision_1} makes the definition of our estimator $\hat{\beta}$ reasonable when $T$ is sufficiently large. In the remainder of this section, we discuss how large $T$ should be as well as the properties of the estimator $\hat{\beta}$. First, as the following proposition shows, $\hat{\beta}$ satisfying \eqref{estimator_def}
uniquely exists as long as $\bar{K}\in(1/\delta,T/\delta)$ and $T>e^{1-\gamma}\approx 1.5262$, where $\gamma\approx 0.5772$ is the Euler--Mascheroni constant.

\begin{proposition}\label{proposition_monotone}
Let $\delta>0$. 
If $T > e^{1-\gamma}\approx 1.5262$, then the function $\eta(\beta)$ in \eqref{k_function} is a smooth, strictly increasing bijection from $(0,1)$ to $(1/\delta,T/\delta)$. 
Moreover, if $T> e^{1-\gamma+\sqrt{\pi^2/6}}\approx 5.5032$, then $\eta(\beta)$ is convex on $(0,1)$. 
\end{proposition}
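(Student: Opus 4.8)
The plan is to pass to the logarithm and read everything off from the digamma and trigamma functions. I would set $g(\beta):=\log\eta(\beta)=\beta\log T-\log\delta-\log\Gamma(\beta+1)$, which is well defined and $C^{\infty}$ on $(0,1)$ since $\Gamma$ is smooth and strictly positive there; hence $\eta=e^{g}$ is smooth. Because $\Gamma(1)=\Gamma(2)=1$, the boundary values are $\eta(0^{+})=1/\delta$ and $\eta(1^{-})=T/\delta$, which already pins down the claimed target interval $(1/\delta,T/\delta)$.

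For the strict monotonicity I would differentiate: writing $\Psi:=\Gamma'/\Gamma$ for the digamma function, $g'(\beta)=\log T-\Psi(\beta+1)$. Since $\Psi'(x)=\sum_{n\ge0}(x+n)^{-2}>0$, the function $\Psi$ is strictly increasing, so on $(0,1)$ we have $\Psi(\beta+1)<\Psi(2)=1-\gamma$. Thus $T>e^{1-\gamma}$ forces $g'(\beta)>0$, and as $\eta'=g'e^{g}$ with $e^{g}>0$, the function $\eta$ is strictly increasing. Together with continuity and the two boundary limits, the intermediate value theorem makes $\eta$ a bijection from $(0,1)$ onto $(1/\delta,T/\delta)$.

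For convexity I would again exploit $\eta=e^{g}$: $\eta''=(g''+(g')^{2})e^{g}$, so $\eta''\ge0$ is equivalent to $(g')^{2}\ge -g''$. Here $g''(\beta)=-\Psi'(\beta+1)$ with $\Psi'$ the trigamma function, so the target inequality reads $(\log T-\Psi(\beta+1))^{2}\ge \Psi'(\beta+1)$ on $(0,1)$. I would bound each side at its own worst endpoint: $\Psi(\beta+1)\le 1-\gamma$ as before, while $\Psi''(x)=-2\sum_{n\ge0}(x+n)^{-3}<0$ shows $\Psi'$ is decreasing, whence $\Psi'(\beta+1)\le\Psi'(1)=\pi^{2}/6$. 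Consequently, if $\log T\ge 1-\gamma+\sqrt{\pi^{2}/6}$, then for every $\beta\in(0,1)$ we get $\log T-\Psi(\beta+1)\ge\sqrt{\pi^{2}/6}\ge\sqrt{\Psi'(\beta+1)}\ge0$, and squaring yields the required inequality; this is precisely the threshold $T>e^{1-\gamma+\sqrt{\pi^{2}/6}}$.

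The only genuine subtlety, more a point to get right than a serious obstacle, is that the extremal values $\Psi(\beta+1)=1-\gamma$ and $\Psi'(\beta+1)=\pi^{2}/6$ are attained at the \emph{opposite} endpoints $\beta\to1$ and $\beta\to0$, so bounding each factor separately at its own endpoint produces a clean sufficient constant rather than a sharp one, which is exactly how the value $e^{1-\gamma+\sqrt{\pi^{2}/6}}$ arises. All that then remains is to record the standard facts invoked, namely $\Psi(2)=1-\gamma$, $\Psi'(1)=\pi^{2}/6$, and the monotonicity of $\Psi$ and $\Psi'$, each of which follows at once from the series representations above.
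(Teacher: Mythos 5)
Your proposal is correct and is essentially the paper's own argument in logarithmic clothing: the paper differentiates $\eta$ directly and factors $\eta''$ as $\frac{T^\beta}{\delta\Gamma(\beta+1)}\bigl(\log T-\Psi_0(\beta+1)+\sqrt{\Psi_0'(\beta+1)}\bigr)\bigl(\log T-\Psi_0(\beta+1)-\sqrt{\Psi_0'(\beta+1)}\bigr)$, which is exactly your condition $(g')^2\ge -g''$ for $g=\log\eta$, and both proofs then invoke the same facts ($\Psi_0$ increasing with $\Psi_0(2)=1-\gamma$, $\Psi_0'$ decreasing with $\Psi_0'(1)=\pi^2/6$) to bound each factor at its worst endpoint. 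No gaps; the two write-ups verify identical inequalities.
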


\begin{proof}
The smoothness of the function $\eta$ follows by the smoothness of the Gamma function. Suppose $T > e^{1-\gamma}$ and observe that
\begin{align}\label{eta_prime}
    \eta'(\beta) &= \dfrac{\Gamma(\beta+1)T^{\beta}\log T - T^{\beta}\Gamma'(\beta+1)}{\delta\Gamma^2(\beta+1)}
    =\dfrac{T^{\beta}\left(\log T - \Psi_0(\beta+1)\right)}{\delta\Gamma(\beta+1)},
\end{align}
where $\Psi_0$ is the digamma function defined by 
$
    \Psi_0(x)=(\mathrm{d}/\mathrm{d}x)\log\Gamma(x)=\Gamma'(x)/\Gamma(x)
$
for $x>0$. 
The digamma function and its derivative have the series representations 
$
    \Psi_0(x)=\sum_{k=0}^\infty ((k+1)^{-1}-(k+x)^{-1})-\gamma
$
and 
$
    \Psi_0'(x)=\sum_{k=0}^\infty (k+x)^{-2}
$
(see 6.3.16 and 6.4.10 in \cite{Abramowitz}).
Since $\Psi_0$ is strictly increasing, 
    $\Psi_0(\beta+1) < \Psi_0(2) = 1 - \gamma<\log T$ for any $\beta\in(0,1)$. 
Combining this with \eqref{eta_prime} 
yields $\eta'(\beta)>0$ for all $\beta\in(0,1)$. Since  $\eta(0^+)=1/\delta$ and $\eta(1^-)=T/\delta$, it follows that $\eta$ is a strictly increasing bijection from $(0,1)$ to $(1/\delta,T/\delta)$.

Now, suppose $T>e^{1-\gamma+\sqrt{\pi^2/6}}$. To prove the convexity of $\eta$, observe that
\begin{align*}
	\eta''(\beta)
	=\frac{T^\beta}{\delta\Gamma(\beta+1)}\Bigl(\log T-\Psi_0(\beta+1)+\sqrt{\Psi_0'(\beta+1)}\Bigr)\Bigl(\log T-\Psi_0(\beta+1)-\sqrt{\Psi_0'(\beta+1)}\Bigr). 
\end{align*}
Thus, the convexity follows upon verifying that $\log T-\Psi_0(\beta+1)-\sqrt{\Psi_0'(\beta+1)}>0$ for all $\beta\in(0,1)$. Since $\Psi_0$ is increasing and $\Psi_0'$ is decreasing, 
\[
    \log T-\Psi_0(\beta+1)-\sqrt{\Psi_0'(\beta+1)}
    >\log T-\Psi_0(2)-\sqrt{\Psi_0'(1)}
    =\log T-(1-\gamma)-\sqrt{\pi^2/6}.
  \]
The latter is positive since $T> e^{1-\gamma+\sqrt{\pi^2/6}}$, which completes the proof.
\end{proof}

In the remainder of the section, assume that $T > e^{1-\gamma}$. 
We have so far defined the method-of-moments-like estimator $\hat{\beta}$ only conditionally on the event that $\bar{K}\in(1/\delta,T/\delta)$. 
To define $\hat{\beta}$  on the entire probability space, we formally set $\hat{\beta}=0$ if $\bar{K}\le 1/\delta$ and $\hat{\beta}=1$ if $\bar{K}\ge T/\delta$. As a result, $\hat{\beta}$ can be expressed as
\begin{align}\label{def_g}
\hat{\beta}=g(\bar{K}), \ \ \textrm{where} \ \ 	g(x):=
	\begin{cases}
	0 &\textrm{if} \ x\in [0,1/\delta];\\
	\eta^{-1}(x) &\textrm{if} \ x\in (1/\delta,T/\delta);\\
	1 &\textrm{if} \ x\in [T/\delta,\infty).
	\end{cases}
\end{align}
This allows $\hat{\beta}$ to take 0 and 1 even though the true parameter $\beta$ cannot. However, in practical situations where $T$ is very large, this will not be a serious issue since, as the following theorem shows, the probability $\mathbb{P}(\bar{K}\not\in (1/\delta,T/\delta))$ can be made as small as we want by taking $T$ large enough.

\begin{theorem}\label{theorem_N-bar}
Let $\delta>0$. Let $\bar{K}$ be the sample mean of a random sample $K_1,\ldots,K_n$ of size $n$ from the distribution of $K$ satisfying \eqref{Edelta_N}. Then as $T\to\infty$, 
\begin{align}\label{asymptotic_Kbar_1}
\mathbb{P}\left(\bar{K}\le \frac 1\delta\right)\le n\mathbb{P}(D_{1+\delta}\ge T) \sim \frac{n(1+\delta)}{\Gamma(1-\beta)}T^{-\beta}.
\end{align}
Moreover, there exists a constant $C>0$ not depending on $T$, $\delta$ or $n$ such that for any $T>e^{1-\gamma}$, 
\begin{align}\label{asymptotic_Kbar_2}
\mathbb{P}\left(\bar{K}\ge \frac T\delta\right)
\le n\mathbb{P}(D_T<T)
\le ne^{-C T}.
\end{align}
\end{theorem}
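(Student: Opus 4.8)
The plan is to prove both inequalities by the same two-stage mechanism: first reduce each tail event for the sample mean $\bar K$ to a tail event for a single copy of $K$ via a union bound, and then convert that event on $K$ into an event on the driving subordinator $(D_t)$ using the defining sandwich $D_{K\delta}\le T< D_{(K+1)\delta}$ together with the monotonicity of $(D_t)$.

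For \eqref{asymptotic_Kbar_1}, I would use that the sample minimum never exceeds the sample mean, so that $\{\bar K\le 1/\delta\}\subseteq\bigcup_{i=1}^n\{K_i\le 1/\delta\}$; the union bound together with the $K_i$ being identically distributed then gives $\mathbb{P}(\bar K\le 1/\delta)\le n\,\mathbb{P}(K\le 1/\delta)$. The decisive inclusion is $\{K\le 1/\delta\}=\{K\delta\le 1\}\subseteq\{D_{1+\delta}\ge T\}$, which I would justify by noting that $K\delta\le 1$ forces $(K+1)\delta\le 1+\delta$ and hence $T< D_{(K+1)\delta}\le D_{1+\delta}$ by monotonicity. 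The claimed asymptotic then follows at once from the polynomial tail $\mathbb{P}(D_t\ge x)\sim t\,x^{-\beta}/\Gamma(1-\beta)$ recalled in Section \ref{section_2}, evaluated at the fixed time $t=1+\delta$ with $x=T\to\infty$.

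For \eqref{asymptotic_Kbar_2}, the symmetric argument uses that the sample maximum is at least the sample mean, giving $\{\bar K\ge T/\delta\}\subseteq\bigcup_{i=1}^n\{K_i\ge T/\delta\}$ and thus $\mathbb{P}(\bar K\ge T/\delta)\le n\,\mathbb{P}(K\ge T/\delta)$. Here the relevant inclusion is $\{K\delta\ge T\}\subseteq\{D_T\le T\}$: since $D_{K\delta}\le T$ always holds and $K\delta\ge T$, monotonicity yields $D_T\le D_{K\delta}\le T$. Because $D_T$ has an absolutely continuous law, $\mathbb{P}(D_T=T)=0$, so $\mathbb{P}(D_T\le T)=\mathbb{P}(D_T<T)$, which delivers the middle inequality.

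The final inequality $\mathbb{P}(D_T<T)\le e^{-CT}$ is the substantive step and the one I expect to require the most care. I would establish it by a Chernoff bound: for any $s>0$, applying Markov's inequality to $e^{-sD_T}$ and using the Laplace transform \eqref{Laplace_stable} with $\psi(s)=s^\beta$ gives $\mathbb{P}(D_T<T)\le e^{sT}\,\mathbb{E}[e^{-sD_T}]=e^{T(s-s^\beta)}$. Since $\beta\in(0,1)$, the exponent $s-s^\beta$ is strictly negative for every $s\in(0,1)$; minimizing over $s$ (at $s_*=\beta^{1/(1-\beta)}\in(0,1)$) produces $C:=s_*^\beta-s_*>0$, which depends only on $\beta$ and not on $T$, $\delta$, or $n$, exactly as the statement demands. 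The points needing verification are that the minimizer genuinely lies in $(0,1)$ and that the resulting $C$ is strictly positive and uniform in the stated parameters; multiplying through by $n$ then completes the argument.
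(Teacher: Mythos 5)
Your proposal is correct and follows essentially the same route as the paper: a union bound reducing the event on $\bar K$ to a tail event for a single $K$, conversion to a tail event for the subordinator, the polynomial tail asymptotic for \eqref{asymptotic_Kbar_1}, and a Chernoff/Markov bound via the Laplace transform \eqref{Laplace_stable} for \eqref{asymptotic_Kbar_2}. The only cosmetic differences are that you pass from $K$ to $D$ directly through the defining relation $T\in[D_{K\delta},D_{(K+1)\delta})$ instead of via the sandwich \eqref{ineq_Spsi} for $E^\delta_T$ and $E_T$, and you optimize the exponent over $s$ (getting $C=s_*^\beta-s_*$ at $s_*=\beta^{1/(1-\beta)}$) where the paper simply takes $\lambda=1/2$; both yield a constant depending only on $\beta$, as required.
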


\begin{proof}
If $K_i>1/\delta$ for all $i$, then $\sum_{i=1}^n K_i>n/\delta$, so 
by \eqref{Edelta_N}, \eqref{ineq_Spsi} and the inverse relation between $(D_t)$ and $(E_t)$, 
\begin{align*}
    \mathbb{P}\left(\bar{K}\le \frac 1\delta\right)
    &\le \mathbb{P}\left(\bigcup_{i=1}^n \left\{K_i\le \frac 1\delta\right\}\right)
    \le n\mathbb{P}\left(K\le \frac 1\delta\right)
    =n\mathbb{P}(E^\delta_T\le 1)\\
    &\le 
    n\mathbb{P}(E_T\le 1+\delta)
    =n\mathbb{P}(D_{1+\delta}\ge T).
\end{align*}  
The latter clearly decays to 0 as $T\to \infty$, and the exact rate of decay follows from the asymptotic behavior of the tail probability of $D_t$ given in Section \ref{section_2}, thereby yielding \eqref{asymptotic_Kbar_1}.

On the other hand, by a similar argument together with the fact that $E_T$ has a density (and hence, $\mathbb{P}(E_T=T)=0$), 
\begin{align*}
    \mathbb{P}\left(\bar{K}\ge \frac T\delta\right)
    &\le \mathbb{P}\left(\bigcup_{i=1}^n \left\{K_i\ge \frac T\delta\right\}\right)
    = n\mathbb{P}\left(K\ge \frac T\delta\right)
    =n\mathbb{P}(E^\delta_T\ge T)\\
    &\le n\mathbb{P}(E_T\ge T)
    =n\mathbb{P}(E_T> T) =n\mathbb{P}(D_T< T).
\end{align*}
By Markov's inequality and relation \eqref{Laplace_stable} with $\psi(s)=s^\beta$, for any fixed $\lambda>0$, 
$
    \mathbb{P}(D_T< T)=\mathbb{P}(e^{\lambda(T-D_T)}>1)
    \le \mathbb{E}[e^{\lambda(T-D_T)}]
    =e^{-(\lambda^\beta-\lambda) T}. 
$
Taking $\lambda=1/2$ so that $C:=\lambda^\beta-\lambda>0$ gives \eqref{asymptotic_Kbar_2}. 
\end{proof}

\begin{remark}\label{remark_Tlarge}
\begin{em}
The construction of our estimator for the parameter $\beta$ is completely different from those of the existing estimators discussed in Section \ref{section_1}, except for the one proposed in \cite{Cahoy2012}. Namely, we analyze the distribution of $K$, which represents the ``number'' of the observed constant periods minus 1, rather than the distribution of the ``lengths'' of the constant periods.  
On the other hand, in \cite{Cahoy2012}, the author derived an estimator using the logarithmic moment of the stable random variable $D_1$ together with the equality in distribution of $E_t$ and $t^\beta D_1^{-\beta}$. (The latter equality is expressed as $Z= t^\alpha R^{-\alpha}$ in that paper.) The estimator for $\beta$ is given for each fixed $t$ by $\hat{\beta}_{\textrm{Cahoy}}(t)=(\hat{\mu}_{\log E_t}+\gamma)/(\log t+\gamma)$, where $\hat{\mu}_{\log E_t}$ denotes the sample mean corresponding to the theoretical mean $\mathbb{E}[\log E_t]$. We can observe a connection of Cahoy's estimator at $t=T$ with our method-of-moments-like estimator. Indeed, $\hat{\beta}_{\textrm{Cahoy}}(T)$ is calculated from observed values of the random variable $\log E_T$, which can be approximated by $\log E^\delta_T=\log (K\delta)$ with $\delta$ small; thus, $\hat{\beta}(T)$ is connected with the random variable $K$ arising in the method-of-moments-like estimation. 
 \end{em}
\end{remark}

The following theorem concerns asymptotic properties of the method-of-moments-like estimator $\hat{\beta}$ for large sample size $n$ (i.e.\ when many observations of the same subdiffusion process are available as in the real-life example given in Section \ref{section_5}). In particular, part (c) allows us to determine how large $T$ should be in order to achieve a target level for the asymptotic variance of $\hat{\beta}$, which is important in applications. Recall that the Gamma function is convex on $(0,\infty)$ and attains the minimum $\Gamma_{\min}:=\min_{x>0} \Gamma(x)\approx 0.8856$ at $x\approx 1.4616$.

\begin{theorem}\label{theorem_asymptotic}
Suppose $\delta>0$ and $\beta\in(0,1)$. Let $K_1,K_2,\ldots$ be an i.i.d.\ sequence from the distribution of $K$ satisfying \eqref{Edelta_N}.
For each $n\in\mathbb{N}$, let $\hat{\beta}_n:=g(\bar{K}_n)$ be the estimator defined in \eqref{def_g} with $\bar{K}_n=\sum_{i=1}^n K_i/n$.
\begin{enumerate}
    \item[\emph{(a)}] For any fixed $T>e^{1-\gamma}\approx 1.5262$, 
        $\lim_{n\to\infty} \hat{\beta}_n \le \beta$ a.s.
    \item[\emph{(b)}] 
        $\lim_{T\to\infty}\lim_{n\to\infty} \hat{\beta}_n = \beta$ a.s.
    \item[\emph{(c)}] For any fixed $T>e^{1-\gamma+\sqrt{\pi^2/6}}\approx 5.5032$,
\begin{align}\label{asymptotic_3}
    \limsup_{n\to\infty} \bigl(n \textit{Var}(\hat{\beta}_n)\bigr)
    < \frac{2}{(\log T+\gamma)^2\Gamma(2\beta+1)}
    < \frac{2}{\Gamma_{\min}(\log T+\gamma)^2}. 
\end{align}
\end{enumerate}
\end{theorem}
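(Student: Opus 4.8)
The three parts rest on the same two facts: the strong law of large numbers pins $\bar K_n$ to its mean $\mu_K$, and the map $g$ in \eqref{def_g} is continuous and nondecreasing, so the asymptotics of $\hat\beta_n=g(\bar K_n)$ are governed by $g(\mu_K)$. For part (a) the plan is to apply the strong law to get $\bar K_n\to\mu_K$ a.s., and to observe that $g$ is continuous on $[0,\infty)$ (it equals the smooth strictly increasing $\eta^{-1}$ on $(1/\delta,T/\delta)$ and is constant on the two end intervals, with matching one-sided limits $\eta^{-1}(1/\delta^+)=0$ and $\eta^{-1}(T/\delta^-)=1$), whence $\hat\beta_n\to g(\mu_K)$ a.s. Since \eqref{key_estimate} gives $\mu_K\le\eta(\beta)<T/\delta$, monotonicity of $g$ yields $g(\mu_K)\le g(\eta(\beta))=\eta^{-1}(\eta(\beta))=\beta$ (or $g(\mu_K)=0\le\beta$ if $\mu_K\le1/\delta$), so $\lim_n\hat\beta_n=g(\mu_K)\le\beta$.

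For part (b) I would start from $\lim_n\hat\beta_n=g(\mu_K)$ and send $T\to\infty$. Because $\eta(\beta)=T^\beta/(\delta\Gamma(\beta+1))\to\infty$, for all large $T$ we have $\mu_K\ge\eta(\beta)-1>1/\delta$, so $g(\mu_K)=\eta^{-1}(\mu_K)$, and the bounds in \eqref{key_estimate} with the monotonicity of $\eta^{-1}$ give the squeeze $\eta^{-1}(\eta(\beta)-1)\le\eta^{-1}(\mu_K)\le\beta$. It then suffices to show $b_T:=\eta^{-1}(\eta(\beta)-1)\to\beta$. Taking logarithms in $\eta(b_T)=\eta(\beta)-1$ and subtracting the corresponding identity for $\beta$ gives $(b_T-\beta)\log T-\log\frac{\Gamma(b_T+1)}{\Gamma(\beta+1)}=\log\!\bigl(1-1/\eta(\beta)\bigr)$. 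The right-hand side tends to $0$ and the $\Gamma$-quotient stays bounded (as $b_T\in(0,\beta]$), so $(b_T-\beta)\log T$ is bounded; since $\log T\to\infty$ this forces $b_T\to\beta$, and the squeeze finishes (b).

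Part (c) is the substantial one, and the plan is central limit theorem $+$ delta method $+$ uniform integrability, followed by an estimate of the limiting variance. First, $K$ has a finite second moment since $K=E^\delta_T/\delta\le E_T/\delta$ and $\mathbb E[E_T^2]=2T^{2\beta}/\Gamma(2\beta+1)<\infty$ by \eqref{first_moment}, so $\sqrt n(\bar K_n-\mu_K)\Rightarrow N(0,\mathrm{Var}(K))$. Writing $\beta^*:=\eta^{-1}(\mu_K)=g(\mu_K)$, the delta method on the interior interval (where $g$ is $C^1$ with $g'(\mu_K)=1/\eta'(\beta^*)$) gives $\sqrt n(\hat\beta_n-\beta^*)\Rightarrow N\bigl(0,(g'(\mu_K))^2\mathrm{Var}(K)\bigr)$. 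To convert this distributional statement into a statement about $n\,\mathrm{Var}(\hat\beta_n)$ I would use the convexity hypothesis $T>e^{1-\gamma+\sqrt{\pi^2/6}}$: by Proposition \ref{proposition_monotone}, $\eta'$ is increasing, so $\eta'\ge\eta'(0^+)=(\log T+\gamma)/\delta$ and $g$ is globally Lipschitz with constant $\delta/(\log T+\gamma)$. Hence $n(\hat\beta_n-\beta^*)^2\le\frac{\delta^2}{(\log T+\gamma)^2}\,n(\bar K_n-\mu_K)^2$, whose right-hand side is uniformly integrable (its expectation is constant and equals the second moment of the normal limit). Since $\hat\beta_n\in[0,1]$ is bounded and the rare events $\{\bar K_n\notin(1/\delta,T/\delta)\}$ carry exponentially small probability, this upgrades the weak limit to $\lim_n n\,\mathrm{Var}(\hat\beta_n)=(g'(\mu_K))^2\mathrm{Var}(K)=:v$.

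It then remains to bound $v$. Using $\eta'(\beta^*)=\mu_K\bigl(\log T-\Psi_0(\beta^*+1)\bigr)$ together with $\mu_K=\mathbb E[E^\delta_T]/\delta$ and $\mathrm{Var}(K)=\mathrm{Var}(E^\delta_T)/\delta^2$, the scale $T^{\beta^*}$ cancels and one obtains the clean identity
\[
  v=\frac{\mathrm{Var}(E^\delta_T)}{\bigl(\mathbb E[E^\delta_T]\bigr)^2\bigl(\log T-\Psi_0(\beta^*+1)\bigr)^2}.
\]
I would bound the numerator by $\mathrm{Var}(E^\delta_T)\le\mathbb E[(E^\delta_T)^2]\le\mathbb E[E_T^2]=2T^{2\beta}/\Gamma(2\beta+1)$ via $E^\delta_T\le E_T$ from \eqref{ineq_Spsi}, write $\mathbb E[E^\delta_T]=T^{\beta^*}/\Gamma(\beta^*+1)$ for the denominator, and use $\Psi_0(\beta^*+1)\le\Psi_0(\beta+1)$ (monotonicity, since $\beta^*\le\beta$); the trivial $\Gamma(2\beta+1)>\Gamma_{\min}$ then supplies the second inequality in \eqref{asymptotic_3}. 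I expect the main obstacle to be exactly this final estimate: replacing $\mathbb E[(E^\delta_T)^2]$ by $\mathbb E[E_T^2]$ reintroduces the factor $T^{2\beta}$ while the denominator carries $T^{2\beta^*}$, so closing the bound requires controlling $T^{2(\beta-\beta^*)}$. The way to handle this is to use $0\le\eta(\beta)-\mu_K\le1$ from \eqref{key_estimate} and the convexity lower bound on $\eta'$ to show $\beta-\beta^*$ is small (vanishing as $T\to\infty$); this smallness, which couples the second-moment ratio to the digamma term, is the crux of making the estimate close in favour of the stated constant $2/\bigl((\log T+\gamma)^2\Gamma(2\beta+1)\bigr)$.
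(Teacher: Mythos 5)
Parts (a) and (b) of your argument are correct. Part (a) is essentially the paper's proof. For part (b) the paper sandwiches $\eta$ between the explicit functions $T^\beta/\delta$ and $T^\beta/(\delta\Gamma_{\min})$, whose inverses are elementary logarithms, whereas you take logarithms in $\eta(b_T)=\eta(\beta)-1$ directly and use boundedness of the Gamma quotient to force $(b_T-\beta)\log T$ to stay bounded; both routes are valid and of comparable length.

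Part (c) is where your proposal has a genuine gap, one you acknowledge yourself. A minor point first: invoking the delta method ``on the interior interval'' silently assumes $\mu_K\in(1/\delta,T/\delta)$; for fixed $T$ and small $\beta$ one can have $\mu_K\le 1/\delta$, where $g$ fails to be differentiable, and the paper devotes a smoothing construction $g_\varepsilon$ to precisely this case. Your Lipschitz domination $n(\hat\beta_n-g(\mu_K))^2\le \frac{\delta^2}{(\log T+\gamma)^2}\,n(\bar K_n-\mu_K)^2$ in fact sidesteps this, since it gives $\limsup_n n\,\textit{Var}(\hat\beta_n)\le \delta^2\sigma_K^2/(\log T+\gamma)^2$ with no differentiability at $\mu_K$ needed, so that part is repairable. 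The decisive problem is the final estimate: combining your Lipschitz constant with $\sigma_K^2\le\mathbb{E}[K^2]\le 2T^{2\beta}/(\delta^2\Gamma(2\beta+1))$ yields $2T^{2\beta}/((\log T+\gamma)^2\Gamma(2\beta+1))$, which exceeds the claimed bound by the factor $T^{2\beta}$, and the repair you sketch --- controlling $T^{2(\beta-\beta^*)}$ because $\beta-\beta^*$ vanishes as $T\to\infty$ --- cannot work for the theorem as stated, which fixes $T$: from $\eta(\beta)-\eta(\beta^*)\le 1$ and $\eta'\ge(\log T+\gamma)/\delta$ one only gets $\beta-\beta^*\le\delta/(\log T+\gamma)$, so $T^{2(\beta-\beta^*)}\le e^{2\delta\log T/(\log T+\gamma)}$, a constant strictly larger than $1$. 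Hence \eqref{asymptotic_3} is not established. For comparison, the paper closes this step by bounding $g_\varepsilon'(\mu_K)$ by its maximum $g_\varepsilon'(1/\delta)=1/D_{+}\eta(0)$ (this is where the convexity hypothesis $T>e^{1-\gamma+\sqrt{\pi^2/6}}$ enters) and evaluating that quantity as $\delta/(T^\beta(\log T+\gamma))$; the $T^\beta$ there is exactly what cancels the $T^{2\beta}$ coming from $\mathbb{E}[K^2]$. Note, however, that \eqref{eta_prime} gives $D_{+}\eta(0)=(\log T+\gamma)/\delta$ with no $T^\beta$ factor, in agreement with your computation of the Lipschitz constant; so the obstruction you identify is a real one that the corresponding step in the paper does not obviously dispose of, and in any case your write-up leaves it open.
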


\begin{proof}
(a) Suppose $T>e^{1-\gamma}$. 
Note that $\mu_K=\mathbb{E}[K]>0$ since $\mathbb{P}(K=0)=\mathbb{P}(D_\delta>T)<1$, whereas
$
	\mu_K\le \eta(\beta)<T/\delta
$
due to \eqref{key_estimate} and Proposition \ref{proposition_monotone}. 
 Thus, $0<\mu_K<T/\delta$. 
By the strong law, $\bar{K}_n \to \mu_K$ as $n\to\infty$ a.s., and since $g$ is continuous, 
\begin{align}\label{LLN}
    \lim_{n\to\infty}\hat{\beta}_n
    =\lim_{n\to\infty}g(\bar{K}_n) 
    =g(\mu_K) \ \ \textrm{a.s.}
\end{align}
If $0<\mu_K\le 1/\delta$, then $g(\mu_K)=0< \beta$. If $1/\delta<\mu_K<T/\delta$, then $g(\mu_K)=\eta^{-1}(\mu_K)\le \eta^{-1}(\eta(\beta))=\beta$ since $\eta^{-1}$ is increasing. 
In both cases, $\lim_{n\to\infty} \hat{\beta}_n=g(\mu_K)\le \beta$ a.s.

(b) By (a), 
it suffices to show that $\liminf_{T\to\infty}g(\mu_K) \ge \beta$. 
Note that $\mu_K\to \infty$ as $T\to\infty$ due to \eqref{key_estimate}.
Note also that
\[
    \underline{\eta}(\beta):=\frac{T^\beta}{\delta}< \eta(\beta)\le \frac{T^\beta}{\delta \Gamma_{\min}} =: \overline{\eta}(\beta). 
\]
Take $T$ large enough so that $\mu_K>1/\delta$ and $\underline{\eta}(\beta)-1>1/(\delta \Gamma_{\min})$. Then by \eqref{key_estimate}, it follows that 
\[
    g(\mu_K)
    =\eta^{-1}(\mu_K)
    \ge \eta^{-1}(\eta(\beta)-1)
    > \eta^{-1}(\underline{\eta}(\beta)-1).
\]
Clearly, $\overline{\eta}(\beta)$ is a strictly increasing bijection from $(0,1)$ to $(1/(\delta \Gamma_{\min}), T/(\delta \Gamma_{\min}))$ with inverse  $\overline{\eta}^{-1}(x)=\log(\delta \Gamma_{\min} x)/\log T$, and $\eta^{-1}(x)\ge \overline{\eta}^{-1}(x)$ for all $x\in (1/(\delta \Gamma_{\min}), T/\delta)$. Thus, 
\begin{align}\label{g_mu}
     g(\mu_K)
     > \overline{\eta}^{-1}(\underline{\eta}(\beta)-1)
     =\frac{\log[\delta\Gamma_{\min}(T^\beta/\delta-1)]}{\log T}
     =\frac{\log \Gamma_{\min}+\log(T^\beta-\delta)}{\log T},
\end{align}
from which the inequality $\liminf_{T\to\infty}g(\mu_K) \ge \beta$ follows, as desired.

(c) By \eqref{Edelta_N} and \eqref{first_moment} with $p=2$,
\begin{align}\label{second_moment}
	\mathbb{E}[K^2]=\frac{1}{\delta^2}\mathbb{E}[(E^\delta_T)^2]
	\le \frac{1}{\delta^2}\mathbb{E}[(E_T)^2]
	=\frac{2T^{2\beta}}{\delta^2 \Gamma(2\beta+1)}. 
\end{align}
In particular, $K_1,K_2,\ldots$ are i.i.d.\ with finite second moment, so by the central limit theorem, as $n\to \infty,$ 
$
    \sqrt{n}(\bar{K}_n-\mu_K) \longrightarrow^{\mathcal{D}} Z\sim \mathcal{N}(0,\sigma_K^2),
$
where $\sigma_K^2$ is the theoretical variance of $K$. 
We wish to apply the delta method to $\hat{\beta}_n=g(\bar{K}_n)$, but since $g'(1/\delta)$ does not exist and any integer-order derivative of $g$ vanishes on $(0,1/\delta)$, the delta method fails if $0<\mu_K\le 1/\delta$, even using higher order Taylor approximations.

To apply the delta method without any technical issues, we smooth out the function $g(x)$ at $x=1/\delta$ as follows: 
\[
	g_\varepsilon(x):=
	\begin{cases}
	h_\varepsilon(x)  &\textrm{if} \ x\in [0,1/\delta];\\
	g(x) &\textrm{if} \ x\in (1/\delta,\infty),
	\end{cases}
\]
where $h_\varepsilon:(0,1/\delta]\to (-\varepsilon,0]$ with small $\varepsilon>0$ is a smooth, strictly increasing, convex function such that 
$
	D_{-}h_\varepsilon(1/\delta)
	=D_{+}g(1/\delta),
$
with $D_{-}$ and $D_{+}$ denoting the derivatives from the left and right, respectively. 
 Then $g_\varepsilon$ is a smooth, strictly increasing function on $(0,T/\delta)$, and hence, $g_\varepsilon'(\mu_K)$ exists and is positive regardless of the value of $\mu_K\in(0,T/\delta)$.
 Therefore, for the modified estimator $\hat{\beta}_{n,\varepsilon}$ defined by 
$
	\hat{\beta}_{n,\varepsilon}=g_\varepsilon(\bar{K}_n), 
$
by the delta method, as $n\to\infty$,  
$
	\sqrt{n}(\hat{\beta}_{n,\varepsilon}-g_\varepsilon(\mu_K)) 
	\longrightarrow^{\mathcal{D}} g_\varepsilon'(\mu_K) Z\sim \mathcal{N}\bigl(0,\sigma_K^2 [g_\varepsilon'(\mu_K)]^2\bigr),
$
 and in particular, 
\begin{align}\label{variance_limit}
    \lim_{n\to\infty}\bigl(n \textit{Var}(\hat{\beta}_{n,\varepsilon}) \bigr)
    =\sigma_K^2 [g_\varepsilon'(\mu_K)]^2.
\end{align}
Moreover, since $g_\varepsilon$ is convex on $(0,1/\delta)$ and concave on $(1/\delta,T/\delta)$ when $T>e^{1-\gamma+\sqrt{\pi^2/6}}$ due to Proposition \ref{proposition_monotone} (as the convexity of $\eta$ implies the concavity of $\eta^{-1}$), it follows that 
 \[
    0< g_\varepsilon'(\mu_K)
    \le \max_{x\in[0,\,T/\delta)}g_\varepsilon'(x)
    =g_\varepsilon'(1/\delta)
    =\frac{1}{D_{+}\eta(0)}
    =\frac{\delta}{T^\beta(\log T+\gamma)}, 
 \]
 where the last equality follows from \eqref{eta_prime}. 
Combining this with \eqref{second_moment} gives 
\begin{align}\label{variance_estimate}
    \sigma_K^2[g_\varepsilon'(\mu_K)]^2
    < \frac{\delta^2\mathbb{E}[K^2]}{T^{2\beta}(\log T+\gamma)^2}
	\le \frac{2}{(\log T+\gamma)^2\Gamma(2\beta+1)}
	< \frac{2}{\Gamma_{\min}(\log T+\gamma)^2}. 
\end{align}
Finally, note that $\hat{\beta}_{n,\varepsilon}$ takes values in $(-\varepsilon,1]$ and that $\hat{\beta}_n=\hat{\beta}_{n,\varepsilon}\mathbf{1}_{\{\hat{\beta}_{n,\varepsilon}>0\}}$, where $\mathbf{1}_A$ is the indicator function of a set $A$. This implies 
    $\textit{Var}(\hat{\beta}_n)< \textit{Var}(\hat{\beta}_{n,\varepsilon})$ for each $n$.
Putting the latter together with \eqref{variance_limit} and  \eqref{variance_estimate} gives \eqref{asymptotic_3}.
\end{proof}

\begin{remark}\label{remark_unbiasedness}
\begin{em}
(a) \textbf{(Asymptotic unbiasedness and consistency)} Theorem \ref{theorem_asymptotic}(b) shows that $\hat{\beta}_n$, when regarded as an estimator indexed by both $n$ and $T$, is asymptotically unbiased and consistent as the indices go to infinity.

(b) By \eqref{LLN} and \eqref{g_mu}, for large $T$ and for each path, 
the estimation error $\beta - \hat{\beta}_n$ with sufficiently large $n$ satisfies
\begin{align}\label{revision_2}
    \beta - \hat{\beta}_n<\beta-\frac{\log \Gamma_{\min}+\log(T^\beta-\delta)}{\log T}
    =-\frac{\log[\Gamma_{\min}(1-\delta T^{-\beta})]}{\log T}. 
\end{align} 
\end{em}
\end{remark}

\begin{remark}\label{remark_mixture}
\begin{em}
\textbf{(Robustness)} Our estimation method can be applied to a more general subordinator whose Laplace exponent $\psi(s)$ in \eqref{Laplace_stable} has the asymptotic behavior  
\begin{align}\label{robust_1}
    \psi(s)\sim s^\beta \ \ \textrm{as} \ s\downarrow 0.
\end{align}
(The special case when $\psi(s)=s^\beta$ for all $s>0$ recovers a $\beta$-stable subordinator.)
 Indeed, in that case, the Laplace transform of the function $T\mapsto \mathbb{E}[E_T]$ satisfies 
$
   \int_0^\infty \mathbb{E}[E_T]e^{-sT}\,\textrm{d}T
    = 1/[s\psi(s)]\sim 1/s^{\beta+1}
$
as $s\downarrow 0$ (see e.g.\ Proposition 3.1 in \cite{JumKobayashi}), and hence, by the Tauberian theorem (see e.g.\ Section 1.7 of \cite{Bingham_book}), $\mathbb{E}[E_T]\sim T^\beta/\Gamma(\beta+1)$ as $T\to \infty$. In other words, equality \eqref{first_moment} with $p=1$ approximately holds for large enough $T$. Hence, when $T$ is large, it is reasonable to use the estimator defined in \eqref{def_g} to estimate the value of $\beta$. 

Note that the asymptotic condition \eqref{robust_1} means the subordinator has power law probability tails. Indeed, by the proof of Lemma 3.4 in \cite{JumKobayashi}, for each fixed $t>0$,
$
    \int_0^\infty \mathbb{P}(D_t\ge x) e^{-sx}\,\textrm{d}x 
    = (1-e^{-t\psi(s)})/s,
$
which is asymptotically equivalent to $t/s^{1-\beta}$ as $s\downarrow 0$ if and only if condition \eqref{robust_1} holds, but by the Tauberian theorem, the latter is equivalent to the statement that $\mathbb{P}(D_t\ge x)\sim t x^{-\beta}/\Gamma(1-\beta)$ as $x\to \infty$. 
\end{em}
\end{remark}

\section{Numerical Comparison to Existing Methods}
\label{section_4}

In this section, we use simulations in Matlab to compare the performance of the method-of-moments-like estimator $\hat{\beta}$ defined in \eqref{def_g} (MOM-like estimator for short) to Cahoy's estimator in \cite{Cahoy2012}, Hill's estimator in \cite{Hill75}, and the Meerschaert--Scheffler estimator in \cite{MS98} (MS estimator for short). We chose the latter three estimators for comparison purposes since i) they are simple to apply, ii) Cahoy's estimator is also constructed via the method of moments, iii) Hill's estimator has been widely employed in practice, and iv) both Cahoy's and MS estimators rely on all the data 
like the MOM-like estimator.
We calculate Hill's estimator based on the largest 10\% of the data as that is common in practice.

Note that this section focuses on data that are realizations of the discretized inverse $\beta$-stable subordinator $(E^\delta_t)$ but not on data coming from a time-changed process of the form $(Y_{E^\delta_t})$. This is because under some assumptions on the outer process $(Y_t)$, a procedure for estimating $\beta$ for data following $(Y_{E^\delta_t})$ is indeed the same as the procedure for estimating $\beta$ for data following $(E^\delta_t)$. We postpone a detailed discussion of this matter to Section \ref{section_5}, where we treat real data observable by means of a time-changed process.

We first generate $n$ paths of the discretized time change $(E^\delta_t)$ on a fixed time interval $[0,T]$, calculate the sample mean $\bar{K}$ for the observed paths, and obtain the MOM-like estimate via equation \eqref{def_g}, 
where $K_i$ is determined as the number of constant periods minus 1 for the $i$th path. 
On the other hand, as mentioned in Remark \ref{remark_Tlarge}, Cahoy's estimator requires realizations of the time change $(E_t)$; however, only the paths of the discretized time change $(E^\delta_t)$ are available here. Therefore, instead of $\hat{\mu}_{\log E_T}$ appearing in Remark \ref{remark_Tlarge}, we use $\hat{\mu}_{\log E^\delta_T}=\hat{\mu}_{\log (K\delta)}:=\frac 1n\sum_{i=1}^n \log(K_i\delta)$ to obtain an approximate version of Cahoy's estimate.
Using the same data set but after aggregating all the constant periods observed in the $n$ paths, we calculate Hill's and MS estimates based on the ``lengths'' (rather than the ``number'') of the constant periods via the formulas provided in \cite{Janczura}.

Note that our simulation 
relies on the choice of three hyper-parameters --- the step size $\delta$, the final time $T$, and the number $n$ of observed paths. So we simulate using various combinations of the hyper-parameters over different values of $\beta$. 
 The results in three particular cases with $\delta$ fixed to be 1 are summarized in the tables in Figure \ref{fig:compare_1}, which indicates that the MOM-like estimator i) improves its performance as $T$ and $n$ increase regardless of the value of $\beta$, ii) is more accurate than Hill's and MS estimators for large values of $\beta$, and iii) performs better than the approximate version of Cahoy's estimator for small values of $\beta$.

\begin{figure}\centering\tiny
      \begin{tabular}{|c|c|c|c|c|}\hline
         \multicolumn{5}{|c|}{$\delta=1$, $T=10000$, $n=3000$}\\ \hline
		$\beta$ & MOM & Cahoy & Hill & MS \\ \hline
		0.1 & 0.1176 & 0.1470 & 0.0965 & 0.1034\\
		0.2 & 0.2070 & 0.2233 & 0.1921 & 0.2059 \\
		0.3 & 0.3011 & 0.3105 & 0.2994 & 0.3265\\
		0.4 & 0.4001 & 0.4021 & 0.3926 & 0.4262\\
		0.5 & 0.5001 & 0.4999 & 0.4977 & 0.5369\\
		0.6 & 0.5997 & 0.5998 & 0.6100 & 0.7130\\
		0.7 & 0.6995 & 0.7012 & 0.7348 & 0.7215\\
		0.8 & 0.7999 & 0.8002 & 0.9000 & 0.8963\\
		0.9 & 0.8999 & 0.9001 & 1.1988 & 0.9586\\ \hline
	\end{tabular}
	\hspace{1mm}
      \begin{tabular}{|c|c|c|c|c|}\hline
         \multicolumn{5}{|c|}{$\delta=1$, $T=10000$, $n=100$}\\ \hline
		$\beta$ & MOM & Cahoy & Hill & MS \\ \hline
		0.1 & 0.1263 & 0.1574 & 0.1071 & 0.1306\\
		0.2 & 0.2103 & 0.2313 & 0.1803 & 0.2215\\
		0.3 & 0.3031 & 0.3137 & 0.2839 & 0.3142\\
		0.4 & 0.4020 & 0.4074 & 0.3940 & 0.3806\\
		0.5 & 0.5051 & 0.5118 & 0.4915 & 0.5585\\
		0.6 & 0.6037 & 0.5987 & 0.6136 & 0.6236\\
		0.7 & 0.6987 & 0.7024 & 0.7418 & 0.8323\\
		0.8 & 0.8004 & 0.8034 & 0.8950 & 0.8163\\
		0.9 & 0.9060 & 0.9056 & 1.2084 & 0.9797\\ \hline
	\end{tabular}
	\hspace{1mm}
      \begin{tabular}{|c|c|c|c|c|}\hline
         \multicolumn{5}{|c|}{$\delta=1$, $T=1000$, $n=100$}\\ \hline
		$\beta$ & MOM & Cahoy & Hill & MS \\ \hline
		0.1 & 0.1346 & 0.1791 & 0.0852 & 0.0839\\
		0.2 & 0.2141 & 0.2409 & 0.1989 & 0.1829 \\
		0.3 & 0.3105 & 0.3338 & 0.2956 & 0.3542\\
		0.4 & 0.3894 & 0.3898 & 0.4104 & 0.4591\\
		0.5 & 0.5186 & 0.5140 & 0.5137 & 0.6280\\
		0.6 & 0.5749 & 0.5759 & 0.5714 & 0.7811\\
		0.7 & 0.6932 & 0.6880 & 0.7094 & 0.7411\\
		0.8 & 0.7921 & 0.7965 & 0.8732 & 0.8927\\
		0.9 & 0.8911 & 0.8940 & 1.1757 & 0.9452\\ \hline
	\end{tabular}
  \caption{Comparison of MOM-like estimates with Cahoy's, Hill's and MS estimates.}
  \label{fig:compare_1}
\end{figure}

\begin{figure}\centering\scriptsize
      \begin{tabular}{|c|c|c|c|c|c|c|c|c|}\hline
		 & \multicolumn{2}{|c|}{MOM} & \multicolumn{2}{|c|}{Cahoy} & \multicolumn{2}{|c|}{Hill} & \multicolumn{2}{|c|}{MS}\\ \hline
		$\beta$ & mean & variance & mean & variance & mean & variance & mean & variance\\ \hline
		0.1 & 0.1160 & 0.0001 & 0.1431 & 0.0001 & 0.1071 & 0.0005 & 0.1133 & 0.0004 \\
		0.2 & 0.2045 & 0.0002 & 0.2189 & 0.0003 & 0.1991 & 0.0011 & 0.2128 & 0.0012 \\
		0.3 & 0.3016 & 0.0002 & 0.3091 & 0.0002 & 0.2935 & 0.0008 & 0.3097 & 0.0018 \\
		0.4 & 0.4011 & 0.0001 & 0.4059 & 0.0002 & 0.3968 & 0.0004 & 0.4126 & 0.0031\\
		0.5 & 0.5003 & 0.0001 & 0.5035 & 0.0002 & 0.4988 & 0.0003 & 0.5433 & 0.0035\\
		0.6 & 0.5990 & 0.0001 & 0.6018 & 0.0001 & 0.6095 & 0.0003 & 0.6322 & 0.0038\\
		0.7 & 0.6998 & 0.0001 & 0.7006 & 0.0002 & 0.7376 & 0.0001 & 0.7656 & 0.0056\\
		0.8 & 0.8002 & 0.0000 & 0.8013 & 0.0001 & 0.8992 & 0.0001 & 0.8919 & 0.0066\\
		0.9 & 0.8992 & 0.0000 & 0.8987 & 0.0001 & 1.1981 & 0.0001 & 1.0321 & 0.0096\\ \hline
	\end{tabular}
  \caption{Comparison of the sample means and sample variances of MOM-like estimates with those of Cahoy's, Hill's and MS estimates based on 100 repetitions of estimation, where $\delta=1$, $T=23400$ and $n=44$.}
  \label{fig:compare_2}
\end{figure}

Next, with $\delta=1$ still fixed, we take $T=23400$ and $n=44$, which provide the setting for a real data to be treated in Section \ref{section_5}, and repeat the above estimation procedure 100 times to produce 100 estimates based on each of the four different methods. We then calculate the sample mean and sample variance for each method. The results summarized in Figure \ref{fig:compare_2} show that the MOM-like estimator gives a reasonably accurate mean with a very low variance for the entire range of $\beta$ values.

We now turn our attention to the effect of the value of $\delta>0$ on the MOM-like estimation. 
It is natural to expect that the accuracy of the MOM-like estimation increases as $\delta$ decreases, which is also indicated by the upper bound for the pathwise estimation error in \eqref{revision_2}. Here, we again record the sample means of estimates based on 100 repetitions of the MOM-like estimation with $T=23400$ and $n=44$ fixed but this time with different values of $\delta$. Figure \ref{fig:delta} gives the simulation results with $\delta$ chosen from the four values $\{0.1, 0.7, 1.3, 1.9\}$, where the corresponding sample variances are all within 0.00025. It shows that when the true $\beta$ value is small, the MOM-like estimator performs better with a smaller value of $\delta$;
 however, for large $\beta$, choosing a smaller value of $\delta$ from the particular four values contributes little to improving the estimation results. 
Unfortunately, we do not have a straightforward criterion for choosing the value of $\delta$; it depends on the accuracy level that one wants to achieve, and generally speaking, a small $\delta$ value is recommended when the data exhibits long constant periods (which implies the true $\beta$ value is small). 
On the other hand, we suggest taking $\delta$ with $T^\beta>\delta$ for any possible $\beta\in(0,1)$ so that the quantity $\eta(\beta)-1$ appearing in the fundamental estimates in \eqref{key_estimate} is guaranteed positive and hence provides a meaningful lower bound for $\mu_K$.  
For the latter purpose, taking $\delta\le 1$ suffices since we always assume $T>e^{1-\gamma}>1$. 
In Section 5, where we deal with real data with $T=23400$ and $n=44$, we take $\delta=1$ since Figures \ref{fig:compare_2} and \ref{fig:delta} guarantee satisfactory performance of the MOM-like estimator for these values of $T$ and $n$ with $\delta=1$.

\begin{figure}\centering\scriptsize
       \begin{tabular}{|c|c|c|c|c|}\hline
         \multicolumn{5}{|c|}{MOM, $T=23400$, $n=44$}\\ \hline
		$\beta$ & $\delta=0.1$ & $\delta=0.7$ & $\delta=1.3$ & $\delta=1.9$ \\ \hline
		0.1 & 0.1006 & 0.1115 & 0.1218 &0.1285\\
		0.2 & 0.1988 & 0.2042 & 0.2064 &0.2108\\
		0.3 & 0.3008 & 0.2995 & 0.3035 &0.3029\\
		0.4 & 0.3997 & 0.3972 & 0.4001 &0.4027\\
		0.5 & 0.4990 & 0.5008 & 0.5011 &0.4992\\
		0.6 & 0.5997 & 0.6001 & 0.5992 &0.6004\\
		0.7 & 0.7005 & 0.6998 & 0.7003 &0.6977\\
		0.8 & 0.7987 & 0.7993 & 0.8001 &0.7996\\
		0.9 & 0.8996 & 0.9004 & 0.9002 &0.8988\\ \hline
	\end{tabular}
  \caption{Comparison of the sample means of MOM-like estimates based on 100 repetitions of estimation with different values of $\delta>0$.}
  \label{fig:delta}
\end{figure}

\begin{remark}\label{remark_advantage}
\begin{em}
(a) The above simulations may seem to suggest that, for realizations of $(E^\delta_t)$,  
the MOM-like estimator outperforms the approximate version of Cahoy's estimator when the true $\beta$ is small. However, Cahoy's estimator has the following advantages: i) it does not involve a root finding algorithm and ii) it is valid for any fixed $T$ and comes with explicit formulas for confidence intervals (whereas the MOM-like estimator requires equation \eqref{estimator_def} to be solved with a certain algorithm and can only provide approximate confidence intervals for large $T$ via the normal approximation depending on small $\varepsilon$ in the proof of Theorem \ref{theorem_asymptotic}(c)). On the other hand, \textit{when a decent computing environment is available and $T$ is very large as is often the case with real data, the MOM-like estimator may become a suitable option as it allows one to estimate the entire range of $\beta$ values with reasonable accuracy and precision.}

(b) In \cite{Wylomanska17_modifiedCDF}, the authors 
introduced a modified version of the cumulative distribution function (CDF) for the lengths of constant periods. The modification accounts for the fact that the beginning and ending of a given constant period in real data may have actually occurred at time points when the data was not recorded. The latter is an important issue when parameter estimation is carried out based on the ``lengths'' of constant periods, while it becomes less of an issue with the MOM-like or Cahoy's estimation since the ``number'' of constant periods 
is not affected by the exact timing of the beginning and ending of each constant period. Being able to avoid a discussion of this subtle issue as well as the simple formula for finding the estimate 
is an advantage of using the MOM-like or Cahoy's estimation. 
In the above discussion, we did not compare the modified CDF method to the other four estimation methods since (i) it requires much more computing power and (ii) it considers the setting for real data observed at pre-specified discrete time points (while we used simulated paths of $(E^\delta_t)$ in which the beginning and ending of each constant period may occur at any time point in $[0,T]$). 
\end{em}
\end{remark}

\section{Real-Life Application: Low-Volume Stock Modeling}
\label{section_5}

This section illustrates how to apply the MOM-like estimator to real data collected from a stock market. Traditionally, low-volume stocks have been difficult to model since they often feature periods in which no trades are made. As a result, the price often has long constant periods throughout the trading day, and therefore, 
it is a good candidate to model using a time-changed process $(X_{E_t})$. However, since an observed data is always discrete, we regard it as a realization of the discretized process $(X_{E^\delta_t})$. 
In terms of the discretized time change $(E^\delta_t)$, we assume that the underlying subordinator $(D_t)$ has power law probability tails with index $\beta\in(0,1)$ even though the use of an exponentially tempered power law might be more appropriate, as pointed out e.g.\ in \cite{MeerschaertRoyShao,OrzelWylomanska11}. (Note that our purpose here is simply to illustrate how to apply our estimation method to given data exhibiting constant periods.)
On the other hand, $(X_t)$ is assumed to be a geometric Brownian motion with representation $X_t=X_0e^{\mu t+\sigma B_t}$, where $(B_t)$ is a Brownian motion independent of the subordinator $(D_t)$ and the constants $\mu \in\mathbb{R}$ and $\sigma>0$ are additional parameters to be estimated. We focus on the logarithmic stock price $(Y_{E_t})$, where $Y_t:=\log (X_t/X_0)=\mu t+\sigma B_t$, and note that the independence assumption implies that $(Y_t)$ is independent of $(E^\delta_t)$.

\begin{figure}\centering\footnotesize
  \begin{minipage}{0.39\hsize}
      \includegraphics[height=4.5cm,width=6cm]{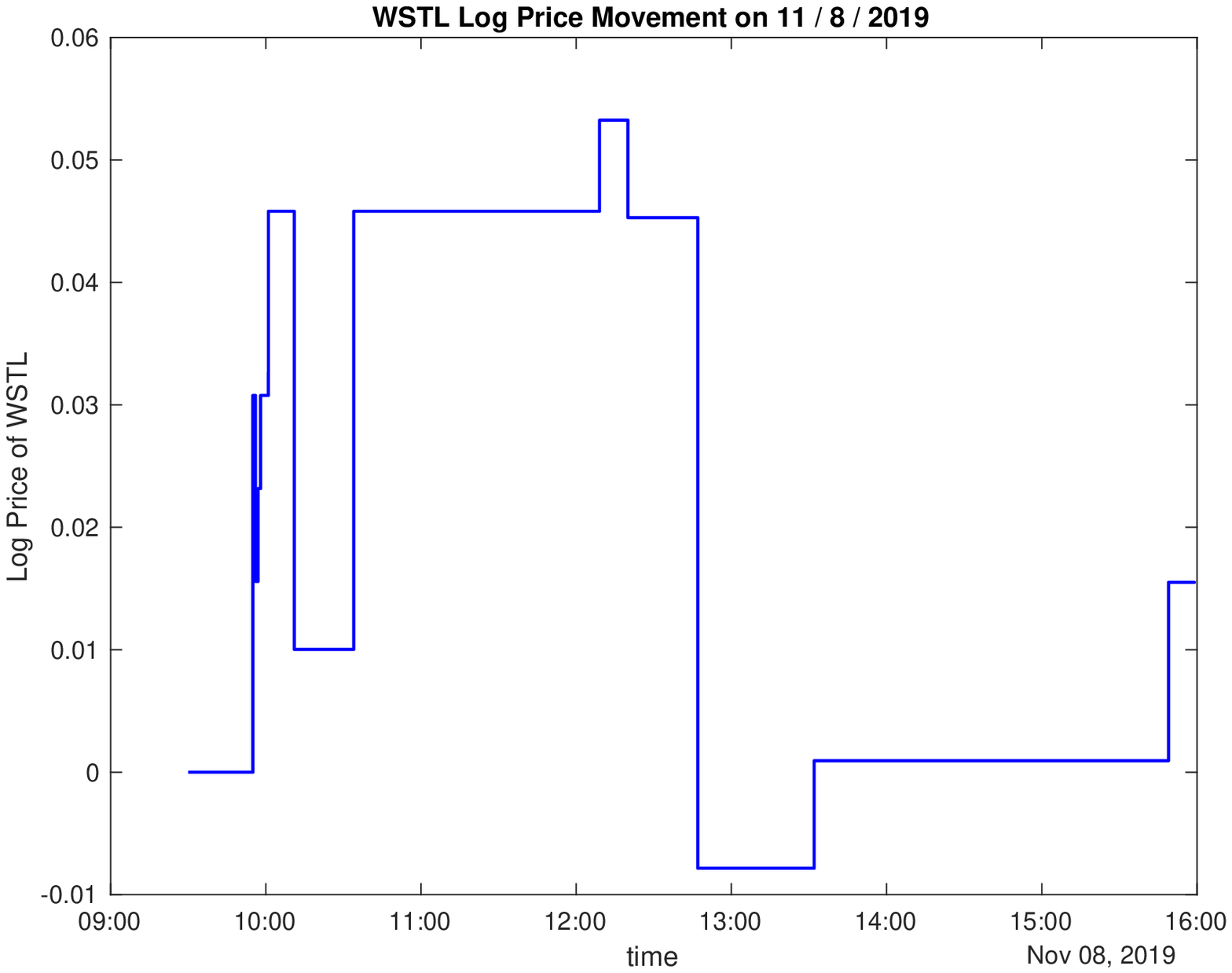}
  \caption{Fluctuation of the logarithmic value of the WSTL price on November 8, 2019.}
  \label{fig:MLAB}
  \end{minipage}
  \hspace{10mm}
   \begin{minipage}{0.38\hsize}
      \includegraphics[height=4.5cm,width=5.7cm]{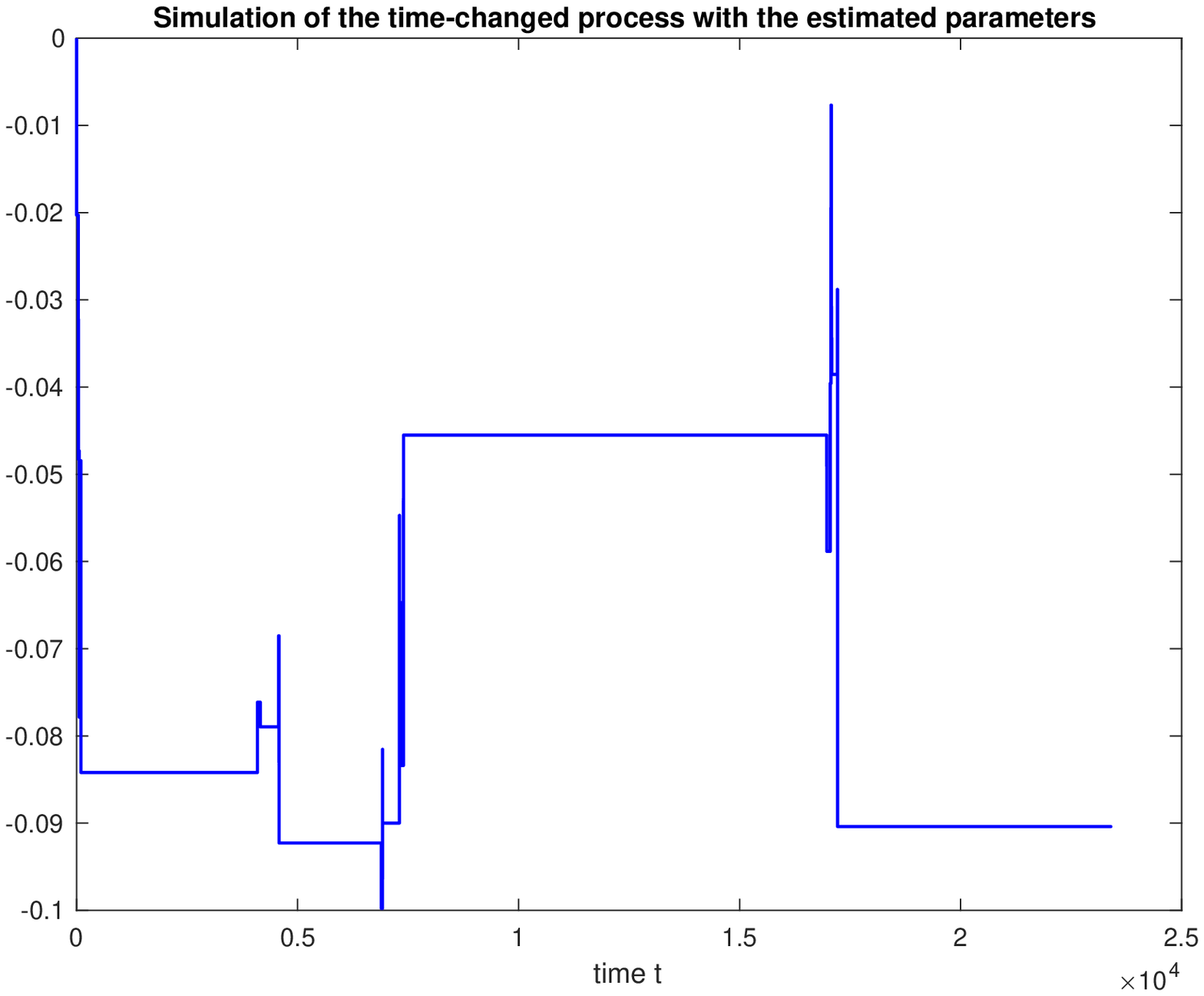}
  \caption{A simulated path of $(Y_{E^\delta_t})$  on $[0,T]$ with $T=23400$ based on the obtained estimates $\hat{\beta}$, $\hat{\mu}$ and $\hat{\sigma}$.}
  \label{fig:simulatedMLAB}
  \end{minipage}
  \end{figure}

Here, we use second-by-second trading data of the low-volume NASDAQ stock with ticker WSTL for the nine weeks starting from September 30, 2019 (consisting of $44$ trading days), obtained from Bloomberg Terminal.
Figure \ref{fig:MLAB} provides the observed path of the logarithmic price on a day, which clearly exhibits some constant periods.
We assume that each of the $n=44$ observed paths is a realization of the discretized process $(Y_{E^\delta_t})$ with $\delta=1$ on the time interval $[0,T]$, with $T = 23400$ being the number of seconds in a trading day.  Recall from Figures \ref{fig:compare_2} and \ref{fig:delta} that the MOM-like estimator performs well for these hyper-parameters if the data is observed by means of $(E^\delta_t)$.

By construction of $(E^\delta_t)$, each path of $(E^\delta_t)$ takes values $0,1,2,\ldots,K$ and has $(K+1)$ constant periods. We claim that the number $N$ of constant periods observed in a path of $(Y_{E^\delta_t})$ equals $K+1$ a.s. To see this, recall that $(Y_t)$ is independent of $(E^\delta_t)$ 
and note that, given the information of the path of $(E^\delta_t)$, $N<K+1$ if and only if $Y_i=Y_{i+1}$ for at least one $i\le K-1$. (For example, if $Y_2=Y_3=Y_4=0$ and the remaining $Y_i$'s are nonzero and distinct from one another, then $N=(K+1)-2$.) Moreover, $Y_0=0$ and the random vector $(Y_1,Y_2,\ldots,Y_K)$ given $(E^\delta_t)$ is non-degenerate multivariate Gaussian and hence has a continuous distribution, so 
\[
	\mathbb{P}(N<K+1)
	=\mathbb{E}\left[\mathbb{P}(N<K+1\,|\,(E^\delta_t))\right]
	=\mathbb{E}\left[\mathbb{P}(Y_i=Y_{i+1} \ \textrm{for at least one} \ i\le K-1\,|\,(E^\delta_t))\right]
	=0, 
\]
which yields $N\ge K+1$ a.s. Since $N>K+1$ is impossible, 
 $N=K+1$ a.s. In other words, \textit{a sample path of $(Y_{E^\delta_t})$ is a step function with constant periods that are completely ascribed to the constant periods of the corresponding path of $(E^\delta_t)$.}

With this observation in mind, we estimate the three parameters $\beta$, $\mu$ and $\sigma$ as follows. 
If an observed path of $(Y_{E^\delta_t})$ constantly changes in value (i.e.\ the values at any two successive time points are distinct), then we consider the number of constant periods to be $T$  (i.e.\ $N=T$). In the other extreme case when an observed path stays constant over the entire time period, we set the number of constant periods to be $1$ (i.e.\ $N=1$). Due to our observation in the previous paragraph, the obtained number $N$ minus 1 for each of the $44$ days is considered a realization of the random variable $K$, and consequently, we obtain a total of $n=44$ realizations of $K$. The extremely large $T$ value makes the observed value of $\bar{K}$ fall in the interval $(1/\delta, T/\delta)=(1,23400)$, and equation \eqref{estimator_def} gives $\hat{\beta}\approx 0.2718$ as the MOM-like estimate for $\beta$. 
(For comparison, Cahoy's, Hill's and MS estimates are 0.2950, 0.7501 and 0.6605, respectively. 
The discrepancy between the estimated values indicates the power law distribution may not be an appropriate model here.)

Next, following the idea presented in e.g.\ \cite{OrzelWylomanska11}, we remove all the constant periods from each of the 44 observed paths and record all the jump sizes over the 44 days. Since each jump size is given in the form $Y_{i+1}-Y_{i}$ with $Y_t=\mu t+\sigma B_t$, we regard the recorded jump sizes as a random sample drawn from $\mu+\sigma Z$, where $Z\sim \mathcal{N}(0,1)$. 
The obtained sample gives the sample mean $\hat{\mu}\approx -4.901\times 10^{-6}$ and sample standard deviation $\hat{\sigma}\approx 1.789\times 10^{-2}$.
Figure \ref{fig:simulatedMLAB} provides a simulated path of the time-changed process $(Y_{E^\delta_t})$ with the obtained estimates $\hat{\beta}$, $\hat{\mu}$ and $\hat{\sigma}$, as a comparison to Figure \ref{fig:MLAB}.

\begin{remark}
\begin{em}
(a) The fact that the outer process $(Y_t)$ is independent of the discretized time change $(E^\delta_t)$ and has a continuous finite-dimensional distribution is essential in the above discussion since otherwise the number $N$ of constant periods in $(Y_{E^\delta_t})$ would not necessarily coincide with the number $K+1$ of constant periods in $(E^\delta_t)$. In particular, our approach for parameter estimation does not seem to be directly applicable to a fractional Poisson process, which is a renewal process with Mittag--Leffler waiting times, or equivalently, a Poisson process time-changed by an independent inverse stable subordinator; see \cite{MNV3} for the two equivalent formulations of the process. With the latter formulation, the outer process (i.e.\ the Poisson process) has a discrete distribution, and the intensity parameter $\lambda$ of the Poisson process as well as the index $\beta$ of the underlying subordinator affects the distribution of the constant periods; see e.g.\ \cite{Cahoy2010} for estimation of the parameters $\lambda$ and $\beta$. 

(b) Due to the simulation results in Figure \ref{fig:delta}, with the above values of $n$ and $T$, the choice of $\delta=1$ seems reasonable for finding the MOM-like estimate for $\beta$. 
However, the value of $\delta$ has a notable effect on the estimates for $\mu$ and $\sigma$ ascribed to the outer process $(Y_t)$.
 Indeed, if $\delta<1$ is chosen, then since $(E^\delta_t)$ takes values $0,\delta,2\delta,\ldots, K\delta$, the time-changed process $(Y_{E^\delta_t})$ takes values $Y_0, Y_\delta, Y_{2\delta},\ldots, Y_{K\delta}$, so the jump sizes are of the form  $Y_{(i+1)\delta}-Y_{i\delta}$. Therefore, it seems appropriate to regard the recorded jump sizes as a random sample from $\mu \delta+\sigma Z_\delta$, where $Z_\delta \sim \mathcal{N}(0, \delta)$. This implies the estimates $\hat{\mu}$ and $\hat{\sigma}$ obtained above with $\delta=1$ must be replaced by
$\hat{\mu}_\delta=\hat{\mu}/\delta$ and $\hat{\sigma}_\delta=\hat{\sigma}/\sqrt{\delta}$, respectively, 
  consequently yielding an estimated process $(Y_{E^\delta_t})$ that is different from the one with $\delta=1$.  
In existing literature concerning estimation of the index $\beta$ in the context of subdiffusions, it seems common to assume that the jump sizes of a time-changed process are given in the form $Y_{i+1}-Y_i$ with the time difference being exactly 1 (see e.g.\ \cite{OrzelWylomanska11}). Our choice of $\delta=1$ for the above real data parallels such an interpretation of the jump sizes, while guaranteeing reasonably accurate and precise estimates for the full range of $\beta$ values as discussed in Section \ref{section_4}.
\end{em}
\end{remark}

\vspace{3mm}

\noindent
\textbf{Acknowledgements:} 
The authors are grateful to the three anonymous referees for their valuable comments and suggestions which significantly improved both the contents and presentation of the paper.
Kei Kobayashi's research was partially supported by a Faculty Fellowship at Fordham University. Part of this research was conducted while Phillip Kerger was affiliated with Fordham University. The authors thank Fordham University for their support.

  \bibliographystyle{plain} 
\bibliography{KobayashiK}

\end{document}